\newtheorem{theorem}{Theorem}
\newtheorem{thm}{Theorem}[section] 
\newtheorem{lemma}[thm]{Lemma}
\theoremstyle{definition}
  \newtheorem{definition-remark}[thm]{Definition-Remark}
\def\geq{\geqslant}
\def\c1{\operatorname{c_1}}
\def\c2{\operatorname{c_2}}
\def\Sym{\operatorname{Sym}}
\def\ZZ{{\mathbb Z}}
\def\PP{{\mathbb P}}
\def\KK{{\mathbb K}}
\def\C{{\mathcal C}}
\def\c{\mathfrak{c}}
\def\cong{\simeq}
\def\geq{\geqslant}
\def\+{\oplus}               % direct sum
\def\*{\otimes}                  % tensor product
\def\Num{\operatorname{Num}}
\def\Sym{\operatorname{Sym}}
\begin{document}

\title{On two extensions of Poncelet theorem}

\author[C.~Ciliberto]{Ciro Ciliberto}
\address{Ciro Ciliberto, Dipartimento di Matematica, Universit\`a di Roma Tor Vergata, Via della Ricerca Scientifica, 00173 Roma, Italy}
\email{cilibert@mat.uniroma2.it}

%\keywords{}

%\subjclass{}

 \begin{abstract}  In this note we provide two extensions of a particular case of Poncelet theorem.
\end{abstract}

\maketitle

%{\tableofcontents%}

\vspace{-1cm}

\section{Introduction}\label{sec:intro}

One of the most beautiful results in classical projective geometry is Poncelet theorem concerning closed polygons which are inscribed in one conic and circumscribed about another. The first instance of this theorem is the following statement: if there is a triangle which is inscribed in a conic  and circumscribed about another conic  then there is a 1--dimensional family of such triangles. In this paper we give two extensions of this statement. The first one (see \S \ref {sec:2}) is a higher dimensional extension to pyramids inscribed in and circumscribed about rational normal curves.  Also in this case, if there is such a pyramid, there is a $1$--dimensional family of them, provided the two rational normal curves enjoy a certain condition with respect to the pyramid. This extension has been inspired  by a paper by E. Beltrami (see \cite{Beltr}) where he is concerned with the case of conics and of rational normal cubics, though it seems to us that his treatment is not fully correct. There is another classical extension of Poncelet theorem, due to G. Darboux and C. Segre (see \cite {Darb2, Darb,Seg}), treated in \S \ref {sec:3}, which deals with multilaterals that are circumscribed about a conic and inscribed in a curve: under suitable conditions, again, if there is one such multilateral, there is a $1$--dimensional family of them. We give here a short new proof of this fact, which paves the road for the further extension we  present in \S \ref {sec:4}. This has to do with a similar situation which occurs on the symmetric product of a curve with general moduli (see Theorem \ref {thm:ponc} for the precise statement). 

To finish, it should be mentioned that there are various other extensions of Poncelet theorem (see for example \cite {GRHA}).

\medskip

\noindent {\bf Aknowledgements:} The author is a member of GNSAGA of INdAM. He acknowledges the MIUR Excellence Department Project awarded to the Department of Mathematics, University of Rome Tor Vergata, CUP E83C18000100006.

\section{Inscribed and circumscribed pyramids to rational normal curves}\label {sec:2}

Let us consider a rational normal curve $C$ in $\PP^n$ over an algebraically closed field $\mathbb K$. Consider also an \emph{elementary pyramid} $\Delta$ with \emph{vertices} at $n+1$ independent points of $\PP^n$. The \emph{faces} of $\Delta$ are the hyperplanes spanned by all vertices but one. An elementary pyramid is uniquely determined by its $n+1$ vertices, as well as by its $n+1$  faces. All elementary pyramids are projectively equivalent to each other. 

We will say that $C$ is \emph{circumscribed} about the elementary pyramid $\Delta$ (or that $\Delta$ is \emph{inscribed} in $C$) if $C$ contains the vertices of $\Delta$. In this case the intersections of each face of $\Delta$ with $C$ are just the $n$ vertices of $\Delta$ on that face. 

We will say that $C$ is \emph{inscribed} in the elementary pyramid $\Delta$ (or that $\Delta$ is \emph{circumscribed} about $C$)  if each face of $\Delta$ \emph{hyperosculates} $C$ in one point, i.e., it has intersection multiplicity $n$ with $C$ in a point of the face.  In this case the unique intersection point of each face with $C$  lies on only that face and on no other face. 

Suppose we have an elementary pyramid $\Delta$ and a rational normal curve $C_1$ circumscribed to $\Delta$ and a rational normal curve $C_2$ inscribed in $\Delta$.  We will say that $C_1$ and $C_2$ are \emph{conjugate} with respect to $\Delta$ if there is a projectivity from $C_1\cong \PP^1$ to $C_2\cong \PP^1$ which maps each of the $n+1$ vertices of $\Delta$ on $C_1$ to the contact point of $C_2$ with the opposite face of $\Delta$. Note that in the case $n=2$ of the classical Poncelet theorem, this is an empty conditions, since the projectivity in question certainly exist. 

The first extension of Poncelet theorem we deal with is the following:

\begin{theorem}\label{thm:ponc1} Suppose there is an elementary pyramid $\Delta$ in $\PP^n$ and two rational normal curves $C_1, C_2$, the former circumscribed about $\Delta$, the latter inscribed in $\Delta$, conjugate with respect to $\Delta$. Then there are infinitely many pyramids inscribed in $C_1$ and circumscribed about $C_2$. More precisely, each point of $C_1$ [resp. of $C_2$] is the vertex [resp. is the contact point with $C_2$ of a face] of a unique elementary pyramid  inscribed in $C_1$ and circumscribed about $C_2$.
\end{theorem}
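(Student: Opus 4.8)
The plan is to translate the entire configuration into linear algebra on $\CC^{n+1}=\Sym^n\CC^2$ and to reduce the statement to a Poncelet-type closure for self-polar simplices. First I would identify $C_1\simeq\PP^1$ by its parametrisation $s\mapsto\nu(s)=(1,s,\dots,s^n)$ and invoke the classical duality: a hyperplane $\sum a_kx_k=0$ meets $C_1$ in the divisor of roots of $\sum a_ks^k$, so hyperplanes correspond bijectively to effective degree-$n$ divisors on $C_1$, the hyperosculating hyperplanes being the divisors $n\cdot p$. Under this dictionary a pyramid inscribed in $C_1$ and circumscribed about $C_2$ is exactly an $(n+1)$-tuple of points $v_0,\dots,v_n\in C_1$ whose opposite faces $F_i=\langle v_j:j\neq i\rangle$ are all hyperosculating hyperplanes of $C_2$.

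Next I would use the conjugacy hypothesis to produce the projectivity $\phi\colon C_1\to C_2$; choosing the parametrisation of $C_2$ so that $\phi=\mathrm{id}$, the face opposite a vertex at parameter $s$ should hyperosculate $C_2$ at the same parameter $s$. Writing $C_2=M\cdot C_1$ and letting $e(s)$ be the coefficient vector of $(X-s)^n$, the hyperosculating hyperplane of $C_2$ at $s$ has coefficient vector $(M^T)^{-1}e(s)$, so the incidence ``the hyperosculating hyperplane of $C_2$ at $s$ passes through the point $\nu(s')$'' is governed by the bihomogeneous form $B(s,s')=\nu(s)^TQ\,\nu(s')$ of bidegree $(n,n)$, where $Q=DM^{-1}$ and $D$ is the (anti-diagonal, hence symmetric) matrix of the rational normal curve self-duality $e=D\nu$. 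Thus a pyramid is precisely an $(n+1)$-tuple on $C_1$ whose vectors are mutually $Q$-conjugate, $\nu(v_i)^TQ\nu(v_j)=0$ for $i\neq j$, with $\nu(v_i)^TQ\nu(v_i)\neq0$.

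A short lemma then upgrades $Q$ to a genuine quadric: the seed pyramid supplies a basis $\{\nu(\sigma_i)\}$ that is $Q$-orthogonal in both orders, hence orthogonal for the antisymmetric part $Q_{\mathrm{anti}}$; since an alternating form possessing an orthogonal basis vanishes, $Q_{\mathrm{anti}}=0$ and $Q$ is symmetric (equivalently $M$ is self-adjoint for $D$). Symmetry has two pleasant consequences: $Q\nu(s)\propto(M^T)^{-1}e(s)$, so the $Q$-polar hyperplane of any $v\in C_1$ is automatically a hyperosculating hyperplane of $C_2$; and $B$ becomes symmetric, so $v'\in F_v$ is equivalent to $v\in F_{v'}$. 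This yields the construction together with its uniqueness: prescribe a general $v_0\in C_1$, take its $Q$-polar hyperplane (a face osculating $C_2$), and let $v_1,\dots,v_n$ be the $n$ points of that hyperplane on $C_1$; these are forced, so at most one pyramid has $v_0$ as a vertex. By symmetry of $B$ each $v_i$ with $i\geq1$ is already conjugate to $v_0$, so all incidences through $v_0$ hold for free, and the dual statement follows by running the same argument on the dual rational normal curve, which is isomorphic to $C_2$.

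The crux, and the step I expect to be the main obstacle, is the closure: showing that the $n$ points $\{v_1,\dots,v_n\}$ cut on $C_1$ by the polar of $v_0$ are pairwise $Q$-conjugate, so that $\{v_0,\dots,v_n\}$ is a full self-polar simplex and every face osculates $C_2$. A naive count is discouraging, since mutual conjugacy imposes $\binom{n+1}{2}$ conditions on an $(n+1)$-tuple, so the family is genuinely ``unexpected'' and its existence rests on the seed. I would establish closure by propagation from the seed: regard the pairwise-conjugacy defects $B(v_i,v_j)$ as functions of $v_0$ along $C_1$, and show, using symmetry of $B$ and the fact that any $n+1$ distinct points of a rational normal curve are independent, that the locus of $v_0$ for which the induced set is self-conjugate is closed, nonempty (it contains $\sigma_0$), and also open, hence all of $C_1$. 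This is precisely the symmetric-correspondence closure mechanism reproved in \S\ref{sec:3}, which I would invoke or adapt to the present situation.
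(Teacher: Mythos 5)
Your reduction of the geometry to the bihomogeneous incidence form $B(s,s')=\nu(s)^{T}Q\,\nu(s')$ is sound, and your key lemma that $Q$ is symmetric is correct and correctly proved: the seed pyramid gives $B(\sigma_i,\sigma_j)=0$ for all \emph{ordered} pairs $i\neq j$, the vectors $\nu(\sigma_i)$ form a basis because $n+1$ distinct points of a rational normal curve are independent, and an alternating form vanishing on all pairs of basis vectors vanishes identically. (One parenthetical slip: ``anti-diagonal, hence symmetric'' is false for $D$ when $n$ is odd --- the self-duality of the rational normal curve by hyperosculating hyperplanes is then a null polarity, as for the twisted cubic --- but this is harmless, since the symmetry you actually use is that of $Q$, which you derive from the seed, not from $D$.) Up to this point your route is genuinely different from the paper's, which instead normalizes everything in partial fractions so that the incidence form $\sum_i A_iB_i/((t-a_i)(s-a_i))$ is visibly symmetric.

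The gap is exactly where you predict it: the closure step is asserted, not proved, and the propagation argument you sketch cannot work as stated. The locus $Z\subseteq C_1\cong\PP^1$ of points $v_0$ whose induced $(n+1)$-tuple is self-conjugate is Zariski closed, and ``also open'' is not something one can argue abstractly: a nonempty proper closed subset of $\PP^1$ is finite, and a priori $Z$ could be exactly the finite set $\{\sigma_0,\dots,\sigma_n\}$; openness of $Z$ \emph{is} the theorem, so invoking it begs the question. What is needed is a mechanism that forces closure, and there are two at hand. The paper's mechanism: in its normal form the incidence form equals the difference quotient $(f(s)-f(t))/(t-s)$ of the rational function $f(x)=\sum_i A_iB_i/(x-a_i)$, so two points are conjugate if and only if they lie in the same fiber of the degree-$(n+1)$ map $f:\PP^1\to\PP^1$; the pyramids are precisely the fibers of $f$ (the paper writes this as the pencil $f(x)=1/k$), and closure is automatic. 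Alternatively, your pointer to Section~\ref{sec:3} can be turned into a proof, but only after the one step you omit: the symmetric $(n,n)$ form $B$ descends to a curve $T$ of degree $n$ on $\Sym^2(\PP^1)=\PP^2$, and the seed pyramid says precisely that $T$ is circumscribed about the general $(n+1)$-lateral cut out by the tangent lines to the diagonal conic at $\sigma_0,\dots,\sigma_n$; Theorem~\ref{thm:darb} (whose proof is independent of the present statement, so there is no circularity) then says that every point of $T$ is a vertex of an inscribed $(n+1)$-lateral circumscribed about the conic, which is exactly the self-conjugacy of your tuples. Either way, closure amounts to showing that $B$ is the correspondence induced by a pencil $g^1_{n+1}$ containing $\sigma_0+\dots+\sigma_n$; until that is established, the main claim is unproved. (A smaller point: your uniqueness argument tacitly assumes that in an \emph{arbitrary} pyramid with vertex $v_0$ the face opposite $v_0$ is the $Q$-polar hyperplane of $v_0$; this too follows once the pencil structure is in place, but not before.)
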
 

\begin{proof} We may assume that $\Delta$ is the \emph{fundamental pyramid}, i.e., its faces are the hyperplanes $x_i=0$, for $i=0,\ldots, n$. If $[u_0,\ldots, u_n]$ are the dual coordinates of $[x_0,\ldots, x_n]$ in $\check{\PP}^n$, the opposite vertex to the face $x_i=0$, as a hyperplane of  $\check{\PP}^n$, has dual equation $u_i=0$, for $i=0,\ldots, n$.

We can consider the 1--dimensional family of hyperosculating hyperplanes to $C_2$, which can be represented by an equation of the form
\[
\sum_{i=0}^n \frac {A_ix_i} {t-a_i}=0
\]
where $t\in \KK$ is the parameter on which the hyperplanes in question depend.  Note that the hyperplane $x_i=0$ corresponds to the value $t=a_i$, hence $a_1,\ldots, a_n$ are all distinct. 

We can consider the variable point on $C_1$ in the dual coordinates $[u_0,\ldots, u_n]$. By the hypothesis that $C_1$ and $C_2$ are conjugate, the equation is of the form
\[
\sum_{i=0}^n \frac {B_iu_i} {s-a_i}=0
\]
where $s\in \KK$ is the parameter on which the point in question depends. Notice that by varying the constants $a_0, \ldots, a_n$, $A_0, \ldots, A_n$ and $B_0, \ldots, B_n$ we can assume that the curves $C_1$ and $C_2$ are general among those circumscribed about $\Delta$,  inscribed in $\Delta$ and conjugate. 

The condition for the osculating hyperplane to $C_2$ corresponding to the parameter $t$ to pass through the point of $C_1$ corresponding to the parameter $s$, is given by
\[
\sum_{i=0}^n \frac {A_iB_i} {(t-a_i)(s-a_i)}=0
\]
and, as one may expect, it is of degree $n$ in both variables $t$ and $s$. 

Consider now the equation
\begin{equation}\label{eq:belt}
\sum_{i=0}^n \frac {A_iB_i} {x-a_i}=\frac 1k
\end{equation}
where $k\in \KK$ is variable. Getting rid of the denominators, we see that this is an equation of degree $n+1$ in $x$, and we denote by $k_0,\ldots, k_n$ its roots, which depend on $k$. Note that for general values of $k$, the equation \eqref {eq:belt} has distinct roots, because this is the case when $k=0$, in that case the roots being $a_0, \ldots, a_n$ which are distinct. Let  $\ell,m$ be two distinct integers in $\{0,\ldots, n\}$, and consider two distinct roots  $k_\ell\neq k_m$ of \eqref {eq:belt}. Plugging both values in \eqref {eq:belt} and equating the left hand sides, one gets the  ${{n+1}\choose 2}$ relations 
\[
\sum_{i=0}^n \frac {A_iB_i} {(k_\ell-a_i)(k_m-a_i)}=0.
\]
This means that if we set 
\[
t=s=k_i, \quad i=0,\ldots, n
\]
we find a pyramid with faces corresponding to the values $t=k_0,\ldots, k_n$ and vertices corresponding to values $s=k_0,\ldots, k_n$, which are both circumscribed to $C_2$ and inscribed to $C_1$. By varying $k$, also this pyramid varies and since each of the roots $k_0,\ldots, k_n$ can assume all possible values, then the vertices of the pyramid on $C_1$ [resp. the contact points of the faces with $C_2$] vary describing all of $C_1$ [resp. all of $C_2$]. \end{proof}

\section{Multilaterals circumscribed about conics and inscribed in curves}\label {sec:3}

Consider in $\PP^2$ a \emph{general $(n+1)$--lateral} $L$, i.e., the closed set formed by $n+1$ distinct lines, called the \emph{sides} of $L$, whose only singular points are ${{n+1}\choose 2}$ nodes, called the \emph{vertices} of $L$. If a curve $C$ passes through all the vertices of $L$ we will say that it is \emph{circumscribed} about $L$ and $L$ is said to be \emph{inscribed} in $C$. If all sides of $L$ are tangent to an irreducible conic $\Gamma$, we will say that $L$ is \emph{circumscribed} about $\Gamma$.

The following lemma is elementary and its proof can be left to the reader (see \cite {Seg}):

\begin{lemma}\label{lem:circ} If $L$ is  a general $(n+1)$--lateral the linear system  of all curves of degree $n$ circumscribed about $L$ has dimension $n$.
\end{lemma}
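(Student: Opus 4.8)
The plan is to exhibit an explicit basis for the linear system and to verify that it is both linearly independent and spanning, which pins the dimension to exactly $n$. Write the sides of $L$ as the zero loci of linear forms $\ell_0,\ldots,\ell_n$, so that the $\binom{n+1}{2}$ vertices are the points $\ell_i\cap\ell_j$ with $i<j$. For each index $j$ I would consider the degree $n$ curve
\[
D_j=\prod_{i\neq j}\ell_i,
\]
namely the union of the $n$ sides other than $\ell_j$. Each $D_j$ is circumscribed about $L$: any vertex $\ell_a\cap\ell_b$ lies on at least one factor of $D_j$, since at most one of $a,b$ can equal $j$. This produces $n+1$ members of the system, which I expect to form a basis.

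First I would check linear independence. Suppose $\sum_j c_jD_j$ vanishes identically and restrict it to a fixed side $\ell_k$. Because $\ell_k$ is a factor of $D_j$ whenever $j\neq k$, every term but the $k$-th restricts to zero, leaving $c_k\,D_k|_{\ell_k}$. Now $D_k|_{\ell_k}$ is a nonzero degree $n$ form on $\ell_k\cong\PP^1$, being a product of the nonzero restrictions $\ell_i|_{\ell_k}$; hence $c_k=0$. Letting $k$ range over all sides gives linear independence of $D_0,\ldots,D_n$, so the system has dimension at least $n$.

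For the reverse inequality I would show that every curve $G$ of degree $n$ through all vertices is a combination of the $D_j$. The crucial input is the genericity of $L$: since its only singularities are $\binom{n+1}{2}$ nodes, no three sides are concurrent, so each side $\ell_k$ carries exactly $n$ distinct vertices. Thus $G|_{\ell_k}$ is a degree $n$ form on $\PP^1$ vanishing at $n$ distinct points, forcing $G|_{\ell_k}=\lambda_k\,D_k|_{\ell_k}$ for a unique scalar $\lambda_k$. Since $D_j|_{\ell_k}=0$ for $j\neq k$, the curve $G-\sum_k\lambda_kD_k$ then vanishes on every side, hence on $\bigcup_k\ell_k$, so it is divisible by $F=\prod_k\ell_k$. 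As $\deg F=n+1$ exceeds the degree of $G-\sum_k\lambda_kD_k$, that difference must vanish and $G=\sum_k\lambda_kD_k$.

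Combining the two steps, the system is freely spanned by $D_0,\ldots,D_n$ and therefore has dimension exactly $n$. I expect the only delicate point to be the spanning step: it is precisely there that the \emph{general} hypothesis enters, through the requirement that the $n$ vertices on each side be distinct, so that a degree $n$ restriction vanishing on them is forced to be proportional to $D_k|_{\ell_k}$. Were three sides allowed to meet in a point, this uniqueness would fail and the dimension could jump.
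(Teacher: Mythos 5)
Your proof is correct. The paper itself gives no argument for this lemma (it is stated as elementary, with the proof ``left to the reader'' and a reference to Segre), so there is nothing to compare against; your argument supplies exactly the kind of elementary proof being alluded to. The two halves are both sound: the products $D_j=\prod_{i\neq j}\ell_i$ are visibly circumscribed about $L$ and are independent by restriction to each side, and the spanning step correctly isolates where generality is used --- since no three sides are concurrent, each side carries $n$ distinct vertices, so any circumscribed curve restricts on $\ell_k$ to a multiple of $D_k|_{\ell_k}$ (possibly zero, if $\ell_k$ divides $G$, which your formula accommodates with $\lambda_k=0$), and the difference $G-\sum_k\lambda_k D_k$, vanishing on all $n+1$ lines but having degree only $n$, must be identically zero.
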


The following Poncelet type theorem is classical (see \cite [p. 189]{Darb2}, \cite [p. 248]{Darb}, \cite {Seg}), however we will give a new proof of it which is suitable for what we will do later. 

\begin{theorem}\label{thm:darb} Let $C$ be a curve of degree $n$ which is circumscribed about a general $(n+1)$--lateral $L$ that in turn is circumscribed about an irreducible conic $\Gamma$. Then there is a 1--dimensional family of general $(n+1)$--laterals which are inscribed in $C$ and circumscribed about $\Gamma$, so that any point of $C$ is the vertex of such a 
$(n+1)$--lateral.
\end{theorem}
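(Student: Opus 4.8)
The plan is to convert the purely projective incidence conditions into a statement about binary forms, using the rational parametrization of the tangent lines of $\Gamma$. Choosing coordinates so that $\Gamma$ is $xz=y^2$, the tangent line at $[1:t:t^2]$ is $\ell_t:\ t^2x-2ty+z=0$, and a direct computation gives $\ell_s\cap\ell_t=[1:\tfrac{s+t}2:st]$. Writing $C=\{F=0\}$ with $\deg F=n$, I would introduce the symmetric bi-form
\[
\Phi(s,t):=F\big(1,\tfrac{s+t}2,st\big),
\]
which a monomial count shows has bidegree $(n,n)$, and whose meaning is that $\Phi(s,t)=0$ exactly when $C$ passes through the vertex $\ell_s\cap\ell_t$. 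A multilateral circumscribed about $\Gamma$ is recorded by the degree $n+1$ form $g(t)=\prod_{i=0}^n(t-t_i)$ whose roots are the contact parameters of its sides, and ``$C$ is circumscribed about it'' becomes $\Phi(t_i,t_j)=0$ for all $i\ne j$.

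The key step is to recognise this vanishing as a Bezoutian condition. For binary forms $g,h$ set
\[
B(g,h)(s,t):=\frac{g(s)h(t)-g(t)h(s)}{s-t},
\]
which is a polynomial, symmetric in $(s,t)$, and bilinear and antisymmetric in $(g,h)$. The decisive elementary identity is that if $t_i$ is a root of $g$ then $B(g,h)(t_i,t)=h(t_i)\,g(t)/(t-t_i)$, so $B(g,h)(t_i,t_j)=0$ for every $j\ne i$. Conversely, starting from the given $C$ circumscribed about $L_0=\{t_0,\dots,t_n\}$ (the roots of $g_0$), I would show $\Phi=B(g_0,h_0)$ for a suitable $h_0$ of degree $\le n$: for each $i$ the slice $\Phi(t_i,\cdot)$ has degree $\le n$ and vanishes at the $n$ points $\{t_j\}_{j\ne i}$, hence equals $c_i\,g_0(t)/(t-t_i)$ for a scalar $c_i$; taking $h_0$ to be the Lagrange interpolant of degree $\le n$ with $h_0(t_i)=c_i$ makes $\Phi$ and $B(g_0,h_0)$ agree at the $n+1$ values $s=t_i$ for all $t$, and since both have degree $\le n$ in $s$ they coincide. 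This is the step I expect to be the main obstacle: recognising the Bezoutian shape is what makes the whole mechanism run. Here Lemma~\ref{lem:circ} is the reassuring dimension count, since the forms $h_0$ of degree $\le n$ sweep out precisely an $n$--dimensional projective system of circumscribed curves, matching the lemma.

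With $\Phi=B(g_0,h_0)$ in hand, the Poncelet closure is formal. Since $\deg h_0\le n<n+1$, $h_0$ is not a multiple of $g_0$, and $h_0\ne 0$ (else $\Phi\equiv0$); thus $g_\lambda:=g_0+\lambda h_0$ is a genuine pencil of degree $n+1$ forms parametrised by $\lambda\in\PP^1$. Bilinearity and antisymmetry give
\[
B(g_\lambda,h_0)=B(g_0,h_0)+\lambda B(h_0,h_0)=B(g_0,h_0)=\Phi ,
\]
so $\Phi=B(g_\lambda,h_0)$ for every $\lambda$. By the identity above, the roots of each $g_\lambda$ form a multilateral circumscribed about $\Gamma$ and inscribed in $C$, which is the desired $1$--dimensional family; for general $\lambda$ the roots are distinct and the multilateral is general, these being open conditions satisfied at $\lambda=0$.

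To see that every $p\in C$ is a vertex, I would let $s_p,t_p$ be the parameters of the two tangents from $p$ to $\Gamma$, whose common point is $p$ itself, so $\Phi(s_p,t_p)=F(p)=0$. This forces $g_0(s_p)h_0(t_p)=g_0(t_p)h_0(s_p)$, i.e. $-g_0(s_p)/h_0(s_p)=-g_0(t_p)/h_0(t_p)=:\lambda$, and for this $\lambda$ both $s_p$ and $t_p$ are roots of $g_\lambda$; hence $p=\ell_{s_p}\cap\ell_{t_p}$ is a vertex of the multilateral attached to $g_\lambda$. The only routine points left are the passage to homogeneous parameters to handle the tangent ``at infinity'' and the verification of the bidegree and genericity claims.
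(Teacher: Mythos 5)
Your proof is correct, and although it operates in the same conceptual framework as the paper's proof, it resolves the crucial step by a genuinely different mechanism. The shared framework: your parametrization $(s,t)\mapsto\ell_s\cap\ell_t$ is the identification $\PP^2\cong\Sym^2(\PP^1)$ with which the paper's proof opens, and your Bezoutian curve $B(g_0,h)=0$ is precisely the symmetric valence-one correspondence $T_\xi$ that the paper attaches to the pencil $\xi=\langle g_0,h\rangle$ of binary forms of degree $n+1$ containing the contact divisor $D_L$. The difference lies in how one shows that the \emph{given} curve $C$ arises from such a pencil. The paper never constructs the pencil: it observes that the curves $T_\xi$ form an irreducible $n$--dimensional family inside the linear system of degree-$n$ curves circumscribed about $L$, invokes Lemma \ref{lem:circ} to say this linear system has dimension exactly $n$, and concludes by dimension count that every circumscribed curve --- in particular $C$ --- equals some $T_\xi$. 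You instead build the pencil from $C$ by hand: the slices $\Phi(t_i,\cdot)$ are forced to be scalar multiples of $g_0(t)/(t-t_i)$, Lagrange interpolation of those scalars produces $h_0$, and the identity $\Phi=B(g_0,h_0)$ follows because both sides have degree $\le n$ in $s$ and agree at the $n+1$ values $s=t_i$. This makes your argument constructive and independent of Lemma \ref{lem:circ}; in fact it reproves that lemma, since $h\mapsto B(g_0,h)$ is linear and injective modulo $g_0$, so your surjectivity statement identifies the circumscribed curves with a $\PP^n$. What the paper's more abstract route buys is portability: the linear-series formulation makes sense on the symmetric product $C(2)$ of a curve of positive genus, where no polynomial interpolation is available, and that is exactly how the argument is reused in Theorem \ref{thm:ponc} --- this is why the paper phrases the proof the way it does. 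The loose ends in your write-up are the ones you flag yourself (homogeneous parameters for the tangent at infinity, and the finitely many points of $C$ where $h_0$ vanishes at a tangency parameter, which are handled by passing to the member $\lambda=\infty$ of the homogenized pencil); these are indeed routine.
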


\begin{proof} Identify $\PP^2$ with the symmetric product $\Sym^2(\PP^1)$, and $\Gamma$ with the diagonal. Then any curve $T$ of degree $n$ of the plane not containing $\Gamma$ can be interpreted as a symmetric correspondence of degree $n$ on $\PP^1$ in the following way.  Identify $\PP^1$ with $\Gamma$. Pick a general point $P\in \Gamma$ and consider the tangent $t_P$ to $\Gamma$ at $P$. Then $t_P$ cuts an effective divisor $D_P$ of degree $n$ on $T$.
For a point $Q$ of $D_P$ draw the tangent line to $\Gamma$ different from $t_P$, which cuts $\Gamma$ at a point $Q'$. Doing this for each point of $D_P$ we identify a divisor $T(P)$ of degree $n$ on $\Gamma$ which is associated to $P$ via $T$. 

The  $(n+1)$--lateral $L$ circumscribed about $\Gamma$, determines a divisor $D_L$ of degree $n+1$ on $\Gamma$, i.e., the reduced intersection of $L$ with $\Gamma$. Consider  a linear series $\xi=g^1_{n+1}$ containing $D_L$. Any such a linear series determines a symmetric correspondence of degree $n$ on $\PP^1$, namely the correspondence which maps a general point $P\in \PP^1$ to the unique divisor $T(P)$ of degree $n$ such that $P+T(P)\in \xi$. Hence $\xi$ determines a curve $T_\xi$ of degree $n$ in $\PP^2$ and  different linear series $\xi$ determine different curves $T_\xi$. Morover since $D_L\in \xi$, it is clear that $T_\xi$ is circumscribed about $L$. 

Now the linear series  $\xi=g^1_{n+1}$ containing $D_L$ describe a projective space of dimension $n$. Hence the curves $T_\xi$ associated to these linear series also vary in an irreducible family of dimension $n$. On the other hand the curves of degree $n$ circumscribed about $L$ form a linear system of dimension $n$ by Lemma \ref {lem:circ}. This means that all curves of degree $n$ circumscribed about $L$ are associated to some $\xi=g^1_{n+1}$. This immediately implies the assertion.  \end{proof}

\section{A Poncelet type theorem on symmetric products of general curves}\label {sec:4}

In this section we want to extend Theorem \ref {thm:darb} to curves on the symmetric  product of curves with general moduli. 

Let $C$ be a smooth, irreducible curve of genus $g>0$, and consider $C(2)=\Sym^2(C)$. We will consider the diagonal $D$ of $C(2)$ which is naturally identified with $C$. The class of $D$ in $\Num(C(2))$ is divisible by 2, and we will denote by $\delta$ the class such that $D\equiv 2\delta$. 

On $C(2)$ we have the $1$--dimensional family of \emph{coordinate curves} $C_P=\{P+Q, Q\in C\}$. They are all tangent to $D$ and intersect each other transversely at one point. We will denote by $x$ the class of any curve $C_P$ in $\Num(S)$.  The datum $L$ of $n+1$ distinct such curves is called a \emph{$(n+1)$--lateral} circumscribed about $D$, and $D$ is \emph{inscribed} in $L$. The $(n+1)$--lateral $L$ has $n+1$ \emph{sides}, i.e., the curves $C_P$ forming it, and ${n+1}\choose 2$ \emph{vertices}, i.e., its nodes. 
A curve $T$ on $S$ which contains the vertices of $L$ is said to be \emph{circumscribed} about $L$ and $L$ is said to be \emph{inscribed} in $T$.

The following theorem is classical and well known (see \cite[\S 3]{cilend}):

\begin{theorem}\label{thm:HS} If $C$ has general moduli, then $\Num(C(2))\cong \ZZ\langle x,\delta\rangle$. 
\end{theorem}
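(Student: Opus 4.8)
The plan is to compute $\Num(C(2))$ through the Hodge structure on $H^2(C(2),\QQ)$ and to locate its algebraic classes by means of the Jacobian $J=J(C)$. First I would use the double cover $\pi\colon C\times C\to C(2)$, which is ramified exactly along the diagonal, and take $\mathfrak{S}_2$-invariants (with the Koszul sign rule) to obtain a splitting of rational Hodge structures
\[
H^2(C(2),\QQ)\cong \QQ\,\eta\ \oplus\ \wedge^2 H^1(C,\QQ)\cong \QQ\,\eta\ \oplus\ H^2(J,\QQ),
\]
where $\eta$ is the invariant part of $H^2(C)\otimes H^0(C)\oplus H^0(C)\otimes H^2(C)$ and is an algebraic class of type $(1,1)$, and the identification $\wedge^2 H^1(C)\cong H^2(J)$ is the standard one coming from $H^1(C)=H^1(J)$.

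Next I would invoke the Lefschetz $(1,1)$-theorem: $\rank\Num(C(2))$ equals the dimension of the space of rational $(1,1)$-classes in $H^2(C(2),\QQ)$. The summand $\QQ\,\eta$ contributes one such class, and the $(1,1)$-classes in $H^2(J,\QQ)$ are exactly $\NS(J)\otimes\QQ$. This is where general moduli enter: for $C$ with general moduli the Jacobian has Picard number one, i.e.\ $\NS(J)=\ZZ\,\theta$. I would justify this by the very-general argument — over $\CC$ the locus of principally polarized abelian varieties of dimension $g$ with Picard number $\geq 2$ is a countable union of proper closed subvarieties of $\mathcal{A}_g$, and the irreducible Jacobian locus is not contained in any one of them, so a very general Jacobian avoids all of them (equivalently, the general curve has $\End(J)=\ZZ$). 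Granting this, the space of rational $(1,1)$-classes has dimension $1+1=2$, whence $\rank\Num(C(2))=2$. The cases $g=1,2$ need no genericity: for $g=1$ one has $\NS(J)=\ZZ\,\theta$ automatically.

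It then remains to show that $x$ and $\delta$ form a basis of this rank-two lattice. For independence I would record $x^2=1$ and $x\cdot\delta=1$ (the coordinate curves meet transversally in one point and are tangent to $D$, so $x\cdot D=2$), together with $\delta^2=1-g$, which follows from $\pi^*D=2\Delta$ and $\Delta^2=2-2g$ on $C\times C$ (giving $D^2=4-4g$, hence $\delta^2=1-g$). The Gram matrix $\left(\begin{smallmatrix}1&1\\1&1-g\end{smallmatrix}\right)$ has determinant $-g\neq 0$, so $x,\delta$ span $\Num(C(2))\otimes\QQ$. For the integral statement I would replace $\delta$ by $u^*\theta$, where $u\colon C(2)\to J$ is the Abel--Jacobi map. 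Writing $u^*\theta=a\,x+b\,\delta$ and using the Poincar\'e formulas $u^*\theta\cdot x=g$ and $(u^*\theta)^2=g(g-1)$ for $g\geq 2$, one gets $a+b=g$ and $(a+b)^2-b^2g=g(g-1)$, forcing $b=\pm 1$; the coefficient of $\delta$ is thus a unit and $\ZZ\langle x,\delta\rangle=\ZZ\langle x,u^*\theta\rangle$. It then suffices to compare $\{x,u^*\theta\}$ with the integral cohomology of the symmetric product (Macdonald) to see that it already generates the full group of integral $(1,1)$-classes, with no proper overlattice.

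The main obstacle is the Picard-number-one input for the general Jacobian: since the Jacobian locus is a proper (Torelli) subvariety of $\mathcal{A}_g$ for $g\geq 4$, one cannot simply cite the very-general point of $\mathcal{A}_g$, and the countable-union argument (equivalently, the classical fact that the general curve has $\End(J)=\ZZ$) is essential. A secondary delicate point is the integral — rather than merely rational — generation in the last step, where one must exclude any class lying strictly between $\ZZ\langle x,\delta\rangle$ and its saturation; this is precisely what the explicit integral cohomology of $\Sym^2 C$, together with the primitivity of the polarization $\theta$, is needed for.
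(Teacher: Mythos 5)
First, a point of comparison: the paper does not actually prove Theorem \ref{thm:HS} at all --- it quotes it as classical and refers to \cite[\S 3]{cilend}, where the statement is deduced from the theory of correspondences on $C\times C$ and from the fact that $\End(J(C))=\ZZ$ for a curve with general moduli. Your Hodge-theoretic route is therefore genuinely different in presentation (it is the transcendental counterpart of the correspondence-theoretic argument), and its bookkeeping is correct: the decomposition $H^2(C(2),\QQ)\cong \QQ\,\eta\oplus\wedge^2H^1(C,\QQ)$ with $\eta=x$ algebraic, the intersection numbers $x^2=1$, $x\cdot\delta=1$, $\delta^2=1-g$, the Gram determinant $-g$, and the relation forcing $u^*\theta=(g+1)x-\delta$, so that $\ZZ\langle x,u^*\theta\rangle=\ZZ\langle x,\delta\rangle$. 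The integral step also closes correctly along the lines you sketch: by Macdonald the group $H^2(C(2),\ZZ)$ is torsion free and splits integrally as $\ZZ\,\eta\oplus\wedge^2H^1(C,\ZZ)$, so any integral $(1,1)$ class is $m\eta+\beta$ with $\beta$ a Hodge class in $H^2(J,\ZZ)$, i.e.\ $\beta\in\NS(J)=\ZZ\,\theta$; no proper overlattice can occur.

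The genuine gap is exactly the one you flag: the input $\NS(J(C))=\ZZ\,\theta$ for $C$ general. The countable-union argument over $\mathcal{A}_g$ does not prove it, because for $g\geq 4$ the Jacobian locus is a proper subvariety of $\mathcal{A}_g$, and the assertion that it is not contained in any of the countably many Noether--Lefschetz loci is precisely the statement to be proved, not an input one can assume. Acknowledging the difficulty is not the same as resolving it, and as written your argument reduces the theorem to an unproved claim. Two standard ways to fill it: (i) monodromy --- the monodromy group of the universal curve of genus $g$ is all of $\operatorname{Sp}(2g,\ZZ)$, and $\wedge^2H^1(C,\QQ)$ decomposes as the trivial line spanned by the symplectic (theta) class plus an irreducible primitive $\operatorname{Sp}$-representation, so for very general $C$ the only rational $(1,1)$ classes in $H^2(J,\QQ)$ are multiples of $\theta$; or (ii) degeneration of $C$ to a $g$-nodal rational curve, which is the method of the cited reference \cite{cilend} (and is the same technique, via Franchetta's degeneration \cite{CK}, that the paper uses in its own \S 4). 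Equivalently one may cite $\End(J(C))=\ZZ$ for general $C$ and use that $\NS(J)\otimes\QQ$ embeds in the Rosati-invariant part of $\End(J)\otimes\QQ$; but that classical fact itself rests on (i) or (ii). With any one of these supplied, your proposal is a complete and correct proof.
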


This has the following geometric counterpart. Let $T$ be a curve on $S$. Then $T$ determines a symmetric correspondence on $C$. If $T\cdot x=n$, then the correspondence has \emph{degree} $n$, i.e., it maps a point $P\in C$ to an effective divisor $T(P)$ of degree $n$ on $C$. The correspondence is said to have \emph{valence} $\gamma$ if and only if the equivalence class of $\gamma P+T(P)$ does not depend on $P$. This is equivalent to say that $T=(n+\gamma)x-\gamma\delta$ in $\Num(S)$. We will denote by $T_{n,\gamma}$ a correspondence of degree $n$ and valence $\gamma$. Theorem \ref {thm:HS} says that if $C$ has general moduli, then every correspondence on $C$ has valence. 

For our purposes it is useful to consider correspondences $T_{n,1}$. 
Particular correspondences of this type are obtained in the following way. Consider a linear series $\xi=g^1_{n+1}$ on $C$. Then consider the symmetric correspondence $T_\xi$ of degree $n$ and valence 1, which maps a general point $P$ of $C$ to the unique divisor $T(P)$ such that $P+T(P)\in \xi$. 

In analogy with Theorem \ref {thm:darb} we will now prove the:

\begin{theorem}\label{thm:ponc} Let $C$ be a general curve of genus $g>0$. Let $n\geq 2g-2$ be an integer. Consider a $(n+1)$--lateral on $C(2)$ circumscribed about the diagonal $D$. Consider a curve $T=T_{n,1}$  which is circumscribed about $L$. Then  there is a 1--dimensional family of  $(n+1)$--laterals which are inscribed in $L$ and circumscribed about $\Gamma$, so that any point of $T$ is the vertex of such a 
$(n+1)$--lateral.
\end{theorem}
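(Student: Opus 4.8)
The plan is to imitate the proof of Theorem \ref{thm:darb}, reducing the whole statement to the assertion that the given correspondence $T$ is of the form $T_\xi$ for a base-point-free pencil $\xi=g^1_{n+1}$ having $D_L$ as one of its members; once this is shown, the members of $\xi$ produce the desired $1$-dimensional family. First I would translate the hypotheses into the language of correspondences. Write $L=C_{P_0}+\cdots+C_{P_n}$, so that $D_L=P_0+\cdots+P_n$ is the divisor of degree $n+1$ on $C\cong D$ cut out by $L$, and the vertices of $L$ are the points $P_i+P_j\in C(2)$. Since $T=T_{n,1}$ has valence $1$, there is a class $M\in\Pic^{n+1}(C)$ with $P+T(P)\sim M$ for every $P$. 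That $T$ is circumscribed about $L$, i.e.\ passes through every vertex $P_i+P_j$, means $P_j\in T(P_i)$ for all $j\neq i$; as $\deg T(P_i)=n$ and there are exactly $n$ indices $j\neq i$, this forces $T(P_i)=D_L-P_i$ and hence $M=[D_L]$. At this point I would invoke $n\geq 2g-2$: it makes $\deg D_L=n+1\geq 2g-1$, so $D_L$ is nonspecial and $|D_L|\cong\PP^{\,n-g+1}$ has the expected (and positive) dimension, mirroring the projective space of pencils occurring in the classical case.

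Next I would introduce the morphism
\[
\phi\colon C\longrightarrow |D_L|\cong\PP^{\,n-g+1},\qquad \phi(P)=[\,P+T(P)\,],
\]
well defined because $P+T(P)\sim D_L$ for all $P$; by the previous step $\phi(P_i)=[D_L]$ for every $i=0,\dots,n$. I would then compute $\deg\phi^*\O(1)$ by pulling back a general hyperplane $H_R=\{E\in|D_L|:E\geq R\}$, $R\in C$ general: one has $\phi(P)\in H_R$ iff $R\in\Supp(P+T(P))$, i.e.\ iff $P=R$ or $P\in T(R)$, so that $\phi^*H_R=R+T(R)$, a divisor of degree $n+1$. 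Hence $\deg\phi^*\O(1)=n+1$ (equivalently $\phi^*\O(1)\cong\O_C(D_L)$).

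The heart of the argument, and the step I expect to be the main obstacle, is to conclude from this that the image $B=\phi(C)$ is a \emph{line}. Since $\deg\phi^*\O(1)=n+1>0$ the map $\phi$ is nonconstant, hence finite onto $B$, and $\deg(\phi\colon C\to B)\cdot\deg B=n+1$. On the other hand the fibre of $\phi$ over $[D_L]$ contains the $n+1$ distinct points $P_0,\dots,P_n$, so $\deg(\phi\colon C\to B)\geq n+1$; comparing with the product forces $\deg B=1$. Thus $\phi$ is a base-point-free $g^1_{n+1}$, which I call $\xi$. A short check then identifies $T$ with $T_\xi$: writing $E_b$ for the divisor of $|D_L|$ represented by a point $b\in B$, every point $P'$ of the fibre $F_b=\phi^*(b)$ satisfies $P'+T(P')=E_b$, whence $F_b\leq E_b$, and since $\deg F_b=\deg E_b=n+1$ we get $F_b=E_b$; therefore $T_\xi(P)=F_{\phi(P)}-P=(P+T(P))-P=T(P)$, and $D_L=F_{[D_L]}$ is a member of $\xi$.

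Finally I would conclude exactly as in Theorem \ref{thm:darb}: for each $b\in\xi\cong\PP^1$ the support of $F_b$ consists of points $Q_0,\dots,Q_n$ with $T(Q_k)=F_b-Q_k$, so the coordinate curves $C_{Q_0},\dots,C_{Q_n}$ form an $(n+1)$-lateral circumscribed about $D$ and inscribed in $T$. This is a $1$-dimensional family containing $L$, and since any point $P+Q$ of $T$ lies on the member $F_{\phi(P)}$ through $P$ (both $P$ and $Q\in T(P)$ belong to $F_{\phi(P)}=P+T(P)$), every point of $T$ occurs as a vertex of such a lateral. The only points needing extra care are that $T$ meets each side $C_{P_i}$ properly, so that $T(P_i)$ is a genuine effective divisor of degree $n$, and that the general fibre of $\phi$ is reduced; both are harmless for a genuine correspondence $T$ on a curve with general moduli.
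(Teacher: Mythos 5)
Your proposal takes a genuinely different route from the paper. The paper never argues directly on the given curve $T$: it first notes that the pencils $\xi=g^1_{n+1}$ inside $|D_L|$ containing $D_L$ produce an $(n-g)$--dimensional family of curves $T_\xi$ circumscribed about $L$, each satisfying the conclusion, and then proves --- via Franchetta's degeneration of $C(2)$ to a rational surface representable in the plane, where the count reduces to Lemma \ref{lem:circ} and Theorem \ref{thm:darb} plus passage through $g$ general points --- that \emph{all} curves of type $T_{n,1}$ circumscribed about $L$ form an irreducible system of dimension $n-g$, so the given $T$ must be one of the $T_\xi$. You instead prove directly that $T=T_\xi$ for a pencil $\xi\subset|D_L|$ through $D_L$, using the morphism $\phi(P)=[P+T(P)]$ and showing its image $B$ is a line. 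If completed, this is cleaner: it avoids the degeneration (the most delicate point of the paper's proof) and in fact uses the generality of $C$ only through the valence hypothesis, which is already part of the statement. Your preliminary steps are sound: the identification $T(P_i)=D_L-P_i$, hence $M=[D_L]$, is correct, and the computation $\deg\phi^*\O(1)=n+1$ can be made rigorous by observing that the incidence divisor $\{(P,Q):\,Q\in P+T(P)\}$ in $C\times C$ equals $\Delta+T'$, with $T'$ the symmetric correspondence, so that $\phi^*H_R=R+T(R)$ as divisors. Everything after the point where $B$ is known to be a line also goes through.

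However, the step you yourself call the heart of the argument contains a genuine gap. From ``the fibre of $\phi$ over $[D_L]$ contains $n+1$ distinct points'' you infer $\deg(\phi\colon C\to B)\geq n+1$; this presupposes that the number of points in \emph{every} fibre is bounded by the degree of the map, which is true only when $B$ is normal. If $B$ is singular at $[D_L]$, a special fibre can be much larger than the degree: the normalization of an irreducible curve of degree $n+1$ with an ordinary $(n+1)$--fold point is a degree--one map whose fibre over that point has $n+1$ points --- and this is precisely the configuration your argument is supposed to exclude, so it cannot be assumed away. The gap is repairable along your own lines: for every hyperplane $H\subset|D_L|$ with $[D_L]\in H$ and $B\not\subseteq H$, the divisor $\phi^*H$ is effective of degree $n+1$ and its support contains the $n+1$ distinct points $P_0,\ldots,P_n$, hence $\phi^*H=P_0+\cdots+P_n$ exactly; thus all hyperplanes through $[D_L]$ cut the \emph{same} divisor on $C$, i.e.\ the corresponding sections of $\phi^*\O(1)$ are pairwise proportional. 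Since the sections of $\phi^*\O(1)$ coming from all hyperplanes form a space of dimension $1+\dim\langle B\rangle$, where $\langle B\rangle$ is the linear span of $B$, while those coming from hyperplanes through $[D_L]$ form a subspace of dimension $\dim\langle B\rangle$, proportionality forces $\dim\langle B\rangle=1$, so $B$ is a line. (Equivalently: one deduces that the multiplicity of $B$ at $[D_L]$ is at least $\deg B$, and an irreducible curve having a point of multiplicity equal to its degree is a line.) With this repair, your proof is complete and constitutes a valid alternative to the paper's degeneration argument.
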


\begin{proof} The proof imitates the one of Theorem \ref {thm:darb}. Indeed, the $(n+1)$--lateral $L$ determines a divisor $D_L$ of degree $n+1\geq 2g-1$ on $D$, namely the reduced divisor cut out by $L$ on $D$. This divisor determines the complete linear series $|D_L|=g_{n+1}^{n+1-g}$ on $D=C$. The linear series $\xi=g_{n+1}^{1}$ contained in $|D_L|$ and containing $D_L$ are parameterized by a projective space of dimension $n-g$. Each such series gives rise to a curve $T_\xi=T_{n,1}$ which is circumscribed about $L$, and it clearly verifies the assertion. So to prove the theorem it suffices to show, as in the proof of Theorem \ref {thm:darb}, that the curves of type $T_{n,1}$  that are circumscribed about $L$ form an irreducible system of dimension $n-g$. 

To see this, we consider Franchetta's degeneration of the symmetric product $C(2)$ when $C$ tends to a rational curve with $g$ general nodes, as explained in \cite {CK}. The limit of the surface $C(2)$ is a rational surface which can be represented on the plane in such a way that the system of curves $T_{n,1}$ circumscribed to $L$ goes to a system of curves of degree $n$, circumscribed to a general $(n+1)$--lateral $L_0$ circumscribed about an irreducible conic $\Gamma$, which also pass through $g$ general points $P_1,\ldots, P_g$ of the plane and cut on the  
two tangent lines $r_{i,1}, r_{i,2}$ to $\Gamma$ through each of the points $P_i$ pairs of points  corresponding to each other under the projectivity $\omega_i: r_{i,1}\to r_{i,2}$ induced by intersections with the tangent lines to $\Gamma$, for $i=1,\ldots, g$.

Now, by Lemma \ref {lem:circ}, the linear system $\C$ of curves of degree $n$ circumscribed to $L_0$ has dimension $n$, and, by Theorem  \ref {thm:darb},  each point of any curve $C$ in $\C$ is the vertex of such a $(n+1)$--lateral inscribed in $C$ and circumscribed to $\Gamma$. So, if we impose to the curves of $\C$ to contain $P_1,\ldots, P_g$, which is $g$ independent conditions because $P_1,\ldots, P_g$ are general points, we obtain a linear system of dimension $n-g$ of curves which are limiting curves of $T_{n,1}$  on $S$. This proves the theorem. \end{proof}

\printindex


\begin{thebibliography}{1}

\bibitem{Beltr}
E.~Beltrami.
\newblock Ricerche di geometria analitica.
\newblock {\em Memorie dell'Accademia delle Scienze dell'Istituto di Bologna},
  10(3):233--312, 1879.

\bibitem{cilend}
C.~Ciliberto.
\newblock Endomorfismi di jacobiane.
\newblock {\em Rend. Sem. Mat. Univ. e Politecnico di Milano}, 59:213--242,
  1989.

\bibitem{CK}
C.~Ciliberto and A.~Kouvidakis.
\newblock {{On the Symmetric Product of a Curve with General Moduli}}.
\newblock {\em Geometriae Dedicata}, 78:327--343, 1999.

\bibitem{Darb2}
G.~Darboux.
\newblock {{Sur una classe remarquable des courbes et de surfaces
  alg\'ebriques}}.
\newblock {\em Mem. de Bordeaux}, 1873.

\bibitem{Darb}
G.~Darboux.
\newblock {\em {{Principes de G\'eom\'etrie analytique}}}.
\newblock Paris, 1917.

\bibitem{GRHA}
Ph. Griffiths and J.~Harris.
\newblock {{A Poncelet theorem in space}}.
\newblock {\em Comment. Math. Helvetici}, 52:145--160, 1977.

\bibitem{Seg}
C.~Segre.
\newblock {{Le curve piane di ordine $n$ circoscritte ad un $(n+1$)--latero
  completo di tangenti ad una conica, e una classe particolare di superficie
  con doppio sistema coniugato di coni circoscritti}}.
\newblock {\em Atti della R. Accad. di Torino}, 59:145--162, 1923--24.

\end{thebibliography}
\end{document}